\let\le\leqslant
\let\ge\geqslant
\let\mc\mathcal
\newcommand{\NN}{\mathbb Z_{>0}}
\newcommand{\NNN}{\mathbb Z_{\ge0}}
\newcommand{\CC}{\mathbb C}
\newcommand{\ZZ}{\mathbb Z}
\newcommand{\QQ}{\mathbb Q}
\newcommand{\eps}{\varepsilon}
\renewcommand{\phi}{\varphi}
\newcommand{\sumjks}{\sum_{j=1}^m\sum_{k=0}^{d-1}\sum_{\sigma=0}^{s_j-1}}
\theoremstyle{plain}
\newtheorem{theorem}{Theorem}
\newtheorem{lemma}{Lemma}
\newtheorem{proposition}{Proposition}
\theoremstyle{remark}
\theoremstyle{definition}
\numberwithin{equation}{section}
\begin{document}

\hypersetup{pdfauthor={igor rochev},%
pdftitle={linear independence measures for values of q-hypergeometric series}}

\title{New linear independence measures for values of $q$-hypergeometric series}

\author{I.~Rochev\thanks{Research is supported by RFBR (grant~No.~09-01-00371a).}}

\date{}

\maketitle

\section{Introduction}

Let $q=q_1/q_2\in\QQ$, where $q_1,q_2\in\ZZ\setminus\{0\}$, $\gcd(q_1,q_2)=1$, $|q_1|>|q_2|$. Put
\begin{equation}\label{eq-gamma}
\gamma=\frac{\log|q_2|}{\log|q_1|}.
\end{equation}
Let $P(z)\in\QQ[z]$ with $d:=\deg P\ge1$. Assume that $P(q^n)\ne0$ for all~$n\in\NN$. Consider the function
\begin{equation*}
f(z)=\sum_{n=0}^\infty\frac{z^n}{\prod_{k=1}^nP(q^k)}.
\end{equation*}

In this note we prove the following theorem.

\begin{theorem}\label{th-1}
Let $\alpha_1,\ldots,\alpha_m\in\QQ^*$ be such that the following conditions hold:
\begin{enumerate}
\item\label{condition-1} $\alpha_j\alpha_k^{-1}\notin q^{\ZZ}$ for all~$j\ne k$,
\item\label{condition-2} $\alpha_j\notin P(0)q^{\NN}$ for all~$j$.
\end{enumerate}
Let $s_1,\ldots,s_m\in\NN$. Put
\begin{gather}
S=s_1+\ldots+s_m,\label{eq-S}\\
M=\begin{cases}
dS+1/2+\sqrt{d^2S^2+1/4},&\text{$P(z)=p_dz^d$, $p_d\in\QQ^*$,}\\
dS+1+\sqrt{dS(dS+1)}&\text{otherwise.}
\end{cases}\label{eq-M}
\end{gather}
Suppose that
\begin{equation*}
\gamma<\frac1M,
\end{equation*}
where $\gamma$ is given by~\eqref{eq-gamma}; then the numbers
\begin{equation*}
1,f^{(\sigma)}(\alpha_jq^k)\qquad(1\le j\le m,0\le k<d,0\le\sigma<s_j)
\end{equation*}
are linearly independent over~$\QQ$. Moreover, there exists a positive constant~$C_0=C_0(q,P,m,\alpha_j,s_j)$ such that for any vector~$\vec A=(A_0,A_{j,k,\sigma})\in\ZZ^{1+dS}\setminus\{\vec0\}$ we have
\begin{equation*}
\left|A_0+\sumjks A_{j,k,\sigma}f^{(\sigma)}(\alpha_jq^k)\right|\ge H^{-\mu-C_0/\sqrt{\log H}},
\end{equation*}
where $H=\max\left\{\max_{j,k,\sigma}|A_{j,k,\sigma}|,2\right\}$,
\begin{equation}\label{eq-mu}
\mu=\frac{M-1}{1-M\gamma}.
\end{equation}
\end{theorem}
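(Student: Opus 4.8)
The plan is to attach the Hermite--Padé/Siegel-lemma approximation machinery to the $q$-difference equation satisfied by $f$ and then run the standard linear-forms argument. I would begin with the functional equation. Writing $P(z)=\sum_{i=0}^dp_iz^i$ and letting $\tau$ be the dilation $(\tau g)(z)=g(qz)$, the identity $a_{n-1}=P(q^n)a_n$ for the Taylor coefficients $a_n=\bigl(\prod_{k=1}^nP(q^k)\bigr)^{-1}$ of $f$ gives, termwise, the inhomogeneous equation $(P(\tau)-z)f(z)=P(1)$, i.e. $\sum_{i=0}^dp_if(q^iz)=zf(z)+P(1)$ (in the monomial case this collapses to the two-term relation $p_df(q^dz)=zf(z)+p_d$). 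Differentiating $\sigma$ times expresses $f^{(\sigma)}$ through $f^{(0)},\dots,f^{(\sigma)}$ at the dilated arguments, so after substituting $z\mapsto\alpha_jq^kz$ the $1+dS$ functions $1$ and $f^{(\sigma)}(\alpha_jq^kz)$ ($1\le j\le m$, $0\le k<d$, $0\le\sigma<s_j$) satisfy a first-order $q$-difference system with coefficients in $\QQ(z)$. This $\tau$-closure is the backbone of everything below; one also checks --- and this is the first use of conditions~\ref{condition-1} and~\ref{condition-2} --- that these $1+dS$ functions are linearly independent over $\QQ(z)$, condition~\ref{condition-1} keeping the dilated arguments unrelated by integral powers of $q$ and condition~\ref{condition-2} excluding the exceptional progression $P(0)q^{\NN}$ (precisely the one on which the inverse-shift operator acquires a pole at $z=1$, and on which, in the simplest cases, $f$ vanishes).

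For a parameter $L\in\NN$ I would then use Siegel's lemma to construct polynomials $A_0,A_{j,k,\sigma}\in\ZZ[z]$ of degree $\le L$, not all zero, with coefficients of size $|q_1|^{O(L^2)}$, such that the remainder $R(z)=A_0(z)+\sumjks A_{j,k,\sigma}(z)f^{(\sigma)}(\alpha_jq^kz)$ vanishes at $z=0$ to an order $T$ just below the number $(1+dS)(L+1)$ of free coefficients. Since $|a_n|=|q_1|^{-(1-\gamma)dn^2/2+O(n)}$, the value $R(1)$ is small --- $\log|R(1)|\le\log\max|A_\bullet|-\tfrac{(1-\gamma)d}{2}T^2\log|q_1|+O(TL\log|q_1|)$ --- while, to make the vanishing conditions integral, one clears the denominators of the $a_n$ (and of the $\alpha_j$-powers), so Siegel's lemma is applied to a system whose entries have size $|q_1|^{\tfrac{\gamma d}{2}T^2+O(TL)}$. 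Evaluating at $z=1$ yields, for each admissible $L$, an integer linear form $\vec P=(A_0(1),A_{j,k,\sigma}(1))$ in the numbers $1,f^{(\sigma)}(\alpha_jq^k)$, of height $X$ and value $Y$ with $\log X$ and $-\log Y$ both of the shape $(\mathrm{const})\cdot L^2\log|q_1|$. Writing $T=\theta(1+dS)L$ and optimizing $\theta\in(0,1)$, the relevant constants are the roots of the quadratic whose larger root is $M$ --- the two-term structure in the monomial case being exactly what shifts this quadratic and produces the smaller $M$ in~\eqref{eq-M} --- and the exponent emerging at the end is $\mu=(M-1)/(1-M\gamma)$, finite precisely under the hypothesis $\gamma<1/M$, with the factor $1/(1-M\gamma)$ accounting for the denominator growth.

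The main obstacle is the accompanying zero estimate. The forms above are worthless unless, for the given $\vec A$, one can select $r:=dS$ of them, $\vec P_1,\dots,\vec P_r$, with $\det(\vec A,\vec P_1,\dots,\vec P_r)\ne0$; I would get this from a non-vanishing statement for the whole family of constructed forms, which amounts to ruling out a nonzero identity $\sumjks B_{j,k,\sigma}(z)f^{(\sigma)}(\alpha_jq^kz)=B_0(z)$ with $B_\bullet\in\QQ[z]$ of small degree. Arguing by contradiction, one shifts such an identity by $z\mapsto qz$ and eliminates $f$ between the two copies using the functional equation; after finitely many iterations the degrees can no longer keep pace, forcing a nonzero polynomial to have more zeros than its degree. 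Conditions~\ref{condition-1} and~\ref{condition-2} are exactly what keep this elimination non-vacuous. Making the estimate uniform in $L$, without spoiling the size bounds of the previous paragraph, is the technically heaviest part of the proof.

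Finally, given $\vec A\in\ZZ^{1+dS}\setminus\{\vec0\}$ with $H=\max\{\max_{j,k,\sigma}|A_{j,k,\sigma}|,2\}$, I would take $L$ to be the least integer for which the forms satisfy $r!\,r\,H\,X^{\,r-1}Y\le\tfrac12$; this pins $L\asymp\sqrt{\log H}$. The zero estimate supplies $\vec P_1,\dots,\vec P_r$ with $|\det(\vec A,\vec P_1,\dots,\vec P_r)|\ge1$; a determinant-preserving column operation turns the column over $(1,f^{(\sigma)}(\alpha_jq^k))$ into $(\Lambda,R_1(1),\dots,R_r(1))$, where $\Lambda=A_0+\sumjks A_{j,k,\sigma}f^{(\sigma)}(\alpha_jq^k)$ and the $R_i$ are the remainders of the $\vec P_i$, and expanding along it bounds the determinant by $r!\bigl(X^{r}|\Lambda|+r\,H\,X^{\,r-1}Y\bigr)$; since $1\le|\det|$ and the second summand contributes at most $\tfrac12$, one gets $|\Lambda|\ge(2\,r!\,X^{r})^{-1}$, which by the choice of $L$ and the optimization above is $\ge H^{-\mu}$ up to a bounded factor. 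The extra $H^{C_0/\sqrt{\log H}}$ is forced because $L$ is an integer, so $L^2$ overshoots its ideal real value by $O(\sqrt{\log H})$ and the exponent by $O(1/\sqrt{\log H})$; the constant $C_0$ depends only on $q,P,m,\alpha_j,s_j$. Linear independence over $\QQ$ is then immediate --- it is also the $\QQ(z)$-linear independence of the first paragraph specialized at $z=1$.
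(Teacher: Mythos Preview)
Your plan is a coherent version of the classical Pad\'e/Siegel-lemma route (essentially the approach of Katsurada and of Amou--V\"a\"an\"anen), and it would certainly produce \emph{some} linear-independence measure of the right shape. But the paper proceeds quite differently, and the difference is exactly what is responsible for the precise value of~$M$ in~\eqref{eq-M}.

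First, the paper does \emph{not} use Siegel's lemma at all. The linear forms are written down explicitly: one sets
\[
v_n(\vec x)=x_0\prod_{k=1}^nP(q^k)+\sum_{l=0}^n u_l(\vec x)\prod_{k=l+1}^nP(q^k),
\]
and then applies a product of first-order difference operators $\mc D_a=\mc I-a\mc B$ to obtain a two-parameter family
\[
v_{l,n}=\Bigl(\prod_{k=1}^l\prod_{j=1}^m\mc D_{\alpha_jq^{-k}}^{s_j}\Bigr)(v_n).
\]
Because $u_n$ is a finite sum of terms $p(n)\beta^n$, these operators annihilate $u_n$ once $l\ge d$, and an inductive estimate (Lemma~\ref{lem-estimate-induction}) gives the sharp bound
\[
|v_{l,n}(\vec\omega)|\le |q|^{-ln+(S-\eps_0/d)l^2/2+O(n)}.
\]
The $\eps_0/d$ saving when $P(z)=p_dz^d$ is what produces the smaller $M$ in that case; it comes out of the recurrence, not out of a Pad\'e order count. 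The paper's own introduction notes that the Siegel-lemma approach (Katsurada) gives a \emph{larger} $M$ in the non-monomial case, so your asserted optimisation ``the relevant constants are the roots of the quadratic whose larger root is $M$'' is precisely the step that is unlikely to go through as stated.

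Second, the non-vanishing is handled differently and much more cleanly than an elimination-with-degree-count argument: if $dS+1$ consecutive $v_{l_0,n}(\vec\omega)$ vanish, the generating function $F(z)=\sum v_n(\vec\omega)z^n$ is shown to be rational (Lemma~\ref{lem-rationality}), and then a pole analysis of the functional equation $(1-p_0z)F(z)=\sum_{\nu\ge1}p_\nu q^\nu zF(q^\nu z)+R(z)$, using conditions~\ref{condition-1} and~\ref{condition-2} exactly once each, rules this out (Lemma~\ref{lem-non-rationality}).

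Third, the endgame uses a \emph{single} integer form rather than an $(r+1)\times(r+1)$ determinant: for the given $\vec A$ one picks $n$ with $w_{l,n}(\vec A)\ne0$, hence $|w_{l,n}(\vec A)|\ge1$; setting $\omega_{j,k,\sigma}=A_{j,k,\sigma}$ and $\omega_0=-\sum A_{j,k,\sigma}f^{(\sigma)}(\alpha_jq^k)$ makes $|w_{l,n}(\vec\omega)|$ small, and since $\vec A-\vec\omega$ is supported on the $x_0$-coordinate one reads off $|\Lambda|\cdot\mc H(w_{l,n})\ge1/2$ directly.

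In short: your outline is a valid strategy for a linear-independence measure, but it is a genuinely different method from the paper's, and the one place where it is doubtful is the claim that the Siegel-lemma optimisation reproduces the exact $M$ of~\eqref{eq-M}. The paper's explicit difference-operator construction is the mechanism that achieves this sharper constant.
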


The case when all roots of~$P$ are rational and $P(0)=0$ was proved in~\cite{Katsurada-1989} with a larger value for the quantity~\eqref{eq-M} if~$P(z)\ne p_dz^d$ (see also~\cite{Sankilampi-Vaananen-2007}). The qualitative part of the general case for~$q\in\ZZ$ was essentially proved in~\cite{Amou-Vaananen-2005}, where it was assumed that $\alpha_j\notin P(0)q^{\ZZ}$ for all~$j$.

Recently the author~\cite{Rochev-2010} proved quantitative results in the general case under a milder condition posed on~$q$ but with the estimate of the form~$\exp\left(-C(\log H)^{3/2}\right)$, $C=\mathrm{const}$. We modify the method of~\cite{Rochev-2010} to prove Theorem~\ref{th-1}.

\section{Construction of auxiliary linear forms}

Fix $\alpha_1,\ldots,\alpha_m\in\CC^*$, $s_1,\ldots,s_m\in\NN$. By $\vec x$ denote the vector of variables~$\vec x=(x_0,x_{j,k,\sigma})$, where $1\le j\le m$, $0\le k<d$, $0\le\sigma<s_j$.

Consider the sequences of linear forms
\begin{gather}
u_n=u_n(\vec x)=\sumjks\sigma!\binom n\sigma\left(\alpha_jq^k\right)^{n-\sigma}x_{j,k,\sigma}\in\CC[\vec x]\qquad(n\in\ZZ),\label{eq-un}\\
\begin{split}
v_n=v_n(\vec x)&=\prod_{k=1}^nP(q^k)\cdot\left(x_0+\sum_{l=0}^n\frac{u_l(\vec x)}{\prod_{k=1}^lP(q^k)}\right)=\\
&=x_0\prod_{k=1}^nP(q^k)+\sum_{l=0}^nu_l(\vec x)\prod_{k=l+1}^nP(q^k)\in\CC[\vec x]\qquad(n\in\NNN).
\end{split}\label{eq-vn}
\end{gather}
It's readily seen that
\begin{equation}\label{eq-vn-linear-recurrence}
v_n=P(q^n)v_{n-1}+u_n\qquad(n\ge1)
\end{equation}
with $v_0=x_0+u_0=x_0+\sumjks x_{j,k,\sigma}$.

Further, let~$\mc B$ be the backward shift operator given by
\begin{equation*}
\mc B\bigl(\xi(n)\bigr)=\xi(n-1).
\end{equation*}
For $a\in\CC$ introduce the difference operator
\begin{equation}\label{eq-mcD}
\mc D_a=\mc I-a\mc B,
\end{equation}
where $\mc I$ is the identity operator, $\mc I\bigl(\xi(n)\bigr)=\xi(n)$. Note that these operators commute with each other. For example, we have
\begin{equation*}
\mc B\left(\mc D_a\bigl(\xi(n)\bigr)\right)=\mc D_a\bigl(\xi(n-1)\bigr).
\end{equation*}
It's well known that for $a\in\CC^*$ and $p(z)\in\CC[z]$ with~$\deg p\le t\in\NNN$ we have
\begin{equation}\label{eq-linear-recurrence}
\mc D_a^{t+1}\bigl(p(n)a^n\bigr)=0\qquad(n\in\ZZ).
\end{equation}
Also, it is readily seen that for $a,b\in\CC$ with $b\ne0$ we have
\begin{equation}\label{eq-mcD-identity}
\mc D_a\bigl(b^n\xi(n)\bigr)=b^n\mc D_{ab^{-1}}\bigl(\xi(n)\bigr).
\end{equation}

Further, for $l,n\in\NNN$ with $n\ge Sl$, where $S$ is given by~\eqref{eq-S}, put
\begin{equation}\label{eq-vln}
v_{l,n}=v_{l,n}(\vec x)=\prod_{k=1}^l\prod_{j=1}^m\mc D_{\alpha_jq^{-k}}^{s_j}\bigl(v_n(\vec x)\bigr):=\left(\prod_{k=1}^l\prod_{j=1}^m\mc D_{\alpha_jq^{-k}}^{s_j}\right)\bigl(v_n(\vec x)\bigr)\in\CC[\vec x].
\end{equation}

Finally, let
\begin{equation}\label{eq-eps0}
\eps_0=\begin{cases}1,&\text{$P(z)=p_dz^d$, $p_d\in\QQ^*$,}\\0&\text{otherwise.}\end{cases}
\end{equation}

\begin{lemma}\label{lem-estimate-induction}
Let $l\ge d$, $\vec\omega=(\omega_0,\omega_{j,k,\sigma})\in\CC^{1+dS}$. Assume that for $0\le\nu<l$ and $n\ge S\nu$ we have
\begin{equation*}
|v_{\nu,n}(\vec\omega)|\le|q|^{-\nu n+(S-\eps_0/d)\nu^2/2+an+b},
\end{equation*}
where $a>0$ and $b$ don't depend on~$\nu$ and $n$. Then for $n\ge Sl$ we have
\begin{equation*}
|v_{l,n}(\vec\omega)|\le|q|^{-ln+(S-\eps_0/d)l^2/2+an+b+a+c'},
\end{equation*}
where $c'$ is a positive constant depending only on~$q,P,m,\alpha_j,s_j$.
\end{lemma}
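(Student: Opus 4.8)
The plan is to exploit the recurrence structure coming from the difference operators $\mc D_{\alpha_jq^{-k}}$ together with the key algebraic fact that applying one more full block $\prod_{j=1}^m\mc D_{\alpha_jq^{-l}}^{s_j}$ to $v_{l-1,n}$ kills the contributions of the variables $x_{j,k,\sigma}$ with $k$ in an appropriate residue class and essentially ``shifts the level by one.'' Concretely, I would start from the representation $v_{l,n}=\bigl(\prod_{j=1}^m\mc D_{\alpha_jq^{-l}}^{s_j}\bigr)(v_{l-1,n})$, expand the operator $\prod_{j=1}^m\mc D_{\alpha_jq^{-l}}^{s_j}$ as a finite linear combination $\sum_{i=0}^{S}c_i\mc B^i$ (with $c_i$ explicit, bounded in terms of $q,m,\alpha_j,s_j$), so that $v_{l,n}(\vec\omega)=\sum_{i=0}^{S}c_i\,v_{l-1,n-i}(\vec\omega)$. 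I then apply the inductive hypothesis to each $v_{l-1,n-i}(\vec\omega)$; this is legitimate because $l-1<l$ and, for $n\ge Sl$, we have $n-i\ge n-S\ge S(l-1)=S\nu$ with $\nu=l-1$, so the hypothesis is in range for every $i\le S$.

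Next I would combine the $S+1$ bounds. Each term is at most $|q|^{-(l-1)(n-i)+(S-\eps_0/d)(l-1)^2/2+a(n-i)+b}$ times $|c_i|$. Factoring out the worst exponent and summing the geometric-type tail gives $|v_{l,n}(\vec\omega)|\le C_1\,|q|^{-(l-1)n+(S-\eps_0/d)(l-1)^2/2+an+b}$, where $C_1$ absorbs $\sum_i|c_i||q|^{(l-1)i-ai}$ — but this is \emph{not} quite the target, because we need the exponent $-ln+(S-\eps_0/d)l^2/2$, i.e.\ a genuine gain of order $n$ in the $l$-direction, not merely the old level $l-1$. The missing ingredient is therefore an additional divisibility/cancellation estimate: the operator block at level $l$ does more than shift $n$; applied to $v_n$ it produces a product $\prod_{k}P(q^k)$ with the top-degree behaviour of $P$ forcing the leading term to carry an extra factor of size $|q|^{-n+(\text{lower order})}$, and it also annihilates the $u_l$-type pieces with the relevant $\sigma$-range. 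So the real computation is to track, in $v_{l,n}=\bigl(\prod_{k=1}^l\prod_{j=1}^m\mc D_{\alpha_jq^{-k}}^{s_j}\bigr)(v_n)$, how the explicit formula \eqref{eq-vn} for $v_n$ interacts with these operators: using \eqref{eq-linear-recurrence} and \eqref{eq-mcD-identity} one shows that the operator block at level $l$ lowers the ``effective degree'' of the surviving $u_l(\vec\omega)\prod_{k=l+1}^nP(q^k)$ contributions, and that the surviving part of $x_0\prod_{k=1}^nP(q^k)$ picks up the factor $|q|^{-n}$ times $|q|^{(S-\eps_0/d)l}$-type corrections coming from the degree-$d$ (resp.\ pure-monomial) shape of $P$. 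Quantifying $\prod_{k=1}^nP(q^k)$ via $\log|P(q^k)|=d k\log|q|+O(1)$ (and the sharper $=dk\log|q|+\log|p_d|$ exactly, in the monomial case, which is where the $\eps_0/d$ saving originates) yields precisely the extra $|q|^{-n+(S-\eps_0/d)(l^2-(l-1)^2)/2}$ factor needed to upgrade the exponent from level $l-1$ to level $l$.

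After that upgrade, collecting constants gives $|v_{l,n}(\vec\omega)|\le|q|^{-ln+(S-\eps_0/d)l^2/2+an+b+a+c'}$, where the ``$+a$'' comes from the shift $n\mapsto n-i$ inside the exponent $a(n-i)$ handled crudely by $a(n-i)\le an$ actually producing no $+a$ — so more carefully the $+a$ arises from one of the boundary index shifts (it is a slack term, and one is free to be generous), and $c'=\log_{|q|}C_1$ plus the bounded contribution of $\prod P$-constants; all of these depend only on $q,P,m,\alpha_j,s_j$ and not on $l,n$, as required. The main obstacle, and the step deserving the most care, is the middle one: establishing the exact algebraic identity that lets the level-$l$ operator block convert $v_{l-1,n}$ into something with the improved exponent, i.e.\ verifying that the combination of the polynomial-annihilation property \eqref{eq-linear-recurrence} of the $\mc D$'s against the $u$-forms and the growth of $\prod_{k=1}^nP(q^k)$ conspire to give exactly the claimed gain $(S-\eps_0/d)(2l-1)/2$ in the exponent; the $\eps_0$ dichotomy must be handled by separating the case $P(z)=p_dz^d$ (where $\prod P(q^k)$ is an exact monomial in $q$ and one extracts the full saving $1/d$) from the general case (where only $d k\log|q|+O(1)$ is available and $\eps_0=0$).
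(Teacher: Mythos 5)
Your first step (peeling off the level-$l$ block $\prod_{j=1}^m\mc D_{\alpha_jq^{-l}}^{s_j}$, expanding it as $\sum_i c_i\mc B^i$ with bounded coefficients, and applying the hypothesis at level $l-1$) is sound but, as you yourself note, only yields the exponent $-(l-1)n$. The gap is in the ``upgrade'' step, and the mechanism you propose for it does not exist. The block $\prod_j\mc D_{\alpha_jq^{-l}}^{s_j}$ is merely a bounded linear combination of backward shifts; acting on a sequence about which you only know $|v_{l-1,n}(\vec\omega)|\le|q|^{-(l-1)n+\cdots}$, it cannot manufacture an extra factor $|q|^{-n}$ --- cancellation against the geometric sequences $(\alpha_jq^{-l})^n$ is not something the hypothesis gives you. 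Nor can the gain come from analysing how the operators act on the explicit formula~\eqref{eq-vn} for $v_n$ and on $\prod_{k=1}^nP(q^k)$: Lemma~\ref{lem-estimate-induction} assumes nothing about $\vec\omega$ beyond the bounds on $v_{\nu,n}(\vec\omega)$ for $\nu<l$ (in particular it does not assume the linear relation of Lemma~\ref{lem-estimate}), and for generic $\vec\omega$ the individual terms of $v_n(\vec\omega)$ have size $|q|^{dn^2/2}$; any gain must therefore be routed through the inductive hypothesis, not through the ``top-degree behaviour of $P$'' or divisibility of $\prod_kP(q^k)$.

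The actual source of the extra $|q|^{-n}$ (in fact $|q|^{-dn}$ at once) is the recurrence in $n$, not the recursion in $l$. Since $l\ge d$, the \emph{shifted} operator product $\prod_{k=1}^l\prod_j\mc D_{\alpha_jq^{d-k}}^{s_j}$ contains the blocks $\mc D_{\alpha_jq^{k'}}^{s_j}$ for all $0\le k'\le d-1$ and hence annihilates $u_{n+1}(\vec\omega)$ by~\eqref{eq-linear-recurrence}. Applying it to $v_{n+1}-P(q^{n+1})v_n=u_{n+1}$ and using~\eqref{eq-mcD-identity} to pull the powers $q^{\nu(n+1)}$ through the operators, one isolates the leading term $p_dq^{d(n+1)}v_{l,n}(\vec\omega)$ and expresses it through quantities of the form $\prod_{k=0}^{\nu-1}\prod_j\mc D_{\alpha_jq^k}^{s_j}\bigl(v_{l-\nu,\,n+\eps}(\vec\omega)\bigr)$ with explicit prefactors $q^{\nu(n+1)}$, $0\le\nu\le d$. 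Dividing by $p_dq^{d(n+1)}$ and bounding each such term by the hypothesis at level $l-\nu$ (the outer operators now have bounded coefficients) is exactly what converts $-(l-\nu)n$ plus the prefactor $|q|^{-\nu n}$ into $-ln$, and a short bookkeeping of the quadratic terms $(S-\eps_0/d)(l-\nu)^2/2$ against the product $\prod_{k,j}(1+|\alpha_jq^{k+l-\nu-a}|)^{s_j}$ produces the stated exponent, with the $\eps_0$ refinement coming from evaluating at $n+1$ rather than $n$ in the monomial case. Without this use of the $n$-recurrence and of the annihilation of $u_{n+1}$, your plan cannot close.
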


\begin{proof}
Since $l\ge d$, it follows from~\eqref{eq-un} and~\eqref{eq-linear-recurrence} that
\begin{equation*}
\prod_{k=1}^l\prod_{j=1}^m\mc D_{\alpha_jq^{d-k}}^{s_j}\bigl(u_n(\vec\omega)\bigr)=0\qquad(n\in\ZZ).
\end{equation*}
Therefore, from~\eqref{eq-vn-linear-recurrence} we have
\begin{equation}\label{eq-intermediate-relation}
\prod_{k=1}^l\prod_{j=1}^m\mc D_{\alpha_jq^{d-k}}^{s_j}\left(v_{n+1}(\vec\omega)-P(q^{n+1})v_n(\vec\omega)\right)=0\qquad(n\ge Sl).
\end{equation}

Let
\begin{equation*}
P(z)=\sum_{\nu=0}^dp_\nu z^\nu.
\end{equation*}
Then in view of~\eqref{eq-mcD-identity} the relation~\eqref{eq-intermediate-relation} can be rewritten in the form
\begin{equation}\label{eq-main-relation}
p_dv_{l,n}(\vec\omega)=q^{-d(n+1)}\prod_{k=1}^l\prod_{j=1}^m\mc D_{\alpha_jq^{d-k}}^{s_j}\bigl(v_{n+1}(\vec\omega)\bigr)-\sum_{\nu=1}^dp_{d-\nu}q^{-\nu(n+1)}\prod_{k=1}^l\prod_{j=1}^m\mc D_{\alpha_jq^{\nu-k}}^{s_j}\bigl(v_n(\vec\omega)\bigr).
\end{equation}

It follows from the conditions of the lemma and~\eqref{eq-mcD-identity} that for $1\le\nu\le d$ we have
\begin{multline}\label{eq-main-inequality}
\left|q^{-\nu n}\prod_{k=1}^l\prod_{j=1}^m\mc D_{\alpha_jq^{\nu-k}}^{s_j}\bigl(v_{n+\eps_0}(\vec\omega)\bigr)\right|=\left|q^{-\nu n}\prod_{k=0}^{\nu-1}\prod_{j=1}^m\mc D_{\alpha_jq^k}^{s_j}\bigl(v_{l-\nu,n+\eps_0}(\vec\omega)\bigr)\right|\le\\
\le|q|^{-\nu n}\prod_{k=0}^{\nu-1}\prod_{j=1}^m\mc D_{-|\alpha_jq^k|}^{s_j}\left(|q|^{-(l-\nu)(n+\eps_0)+(S-\eps_0/d)(l-\nu)^2/2+a(n+\eps_0)+b}\right)=\\
=|q|^{-\nu n-(l-\nu)(n+\eps_0)+(S-\eps_0/d)(l-\nu)^2/2+a(n+\eps_0)+b}\prod_{k=0}^{\nu-1}\prod_{j=1}^m\bigl(1+|\alpha_jq^{k+l-\nu-a}|\bigr)^{s_j}\le\\
\le|q|^{-ln+(S-\eps_0/d)l^2/2-(1-\nu/d)\eps_0l+a(n+\eps_0)+b+c_1}\le|q|^{-ln+(S-\eps_0/d)l^2/2+a(n+\eps_0)+b+c_1},
\end{multline}
where $c_1$ is a constant depending only on~$q,P,m,\alpha_j,s_j$.

The lemma follows from~\eqref{eq-main-relation} and~\eqref{eq-main-inequality}.
\end{proof}

\begin{lemma}\label{lem-estimate}
Let $\vec\omega=(\omega_0,\omega_{j,k,\sigma})\in\CC^{1+dS}$ be such that
\begin{equation*}
\omega_0+\sumjks\omega_{j,k,\sigma}f^{(\sigma)}(\alpha_jq^k)=0.
\end{equation*}
Then for $l\ge0$ and $n\ge Sl$ we have
\begin{equation*}
|v_{l,n}(\vec\omega)|\le\max_{j,k,\sigma}|\omega_{j,k,\sigma}|\cdot|q|^{-ln+(S-\eps_0/d)l^2/2+c(n+1)},
\end{equation*}
where $c$ is a positive constant depending only on~$q,P,m,\alpha_j,s_j$.
\end{lemma}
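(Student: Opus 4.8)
The plan is to prove Lemma~\ref{lem-estimate} in two stages: first establish the bound for small $l$ (namely $0\le l\le d$) by a direct estimate, and then invoke Lemma~\ref{lem-estimate-induction} to propagate it to all $l\ge d$. The key observation driving the whole argument is that the hypothesis $\omega_0+\sumjks\omega_{j,k,\sigma}f^{(\sigma)}(\alpha_jq^k)=0$ makes the \emph{tail} of the series representation of $v_n(\vec\omega)$ small. Indeed, from~\eqref{eq-vn} we have
\begin{equation*}
\frac{v_n(\vec\omega)}{\prod_{k=1}^nP(q^k)}=\omega_0+\sum_{l=0}^n\frac{u_l(\vec\omega)}{\prod_{k=1}^lP(q^k)},
\end{equation*}
and comparing with the vanishing linear combination — after checking that $u_l(\vec\omega)$ is precisely the generic term one gets by differentiating $f$ at the points $\alpha_jq^k$ and re-expanding — the partial sum up to $n$ differs from $0$ only by the tail $-\sum_{l>n}(\dots)$. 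So $v_n(\vec\omega)/\prod_{k=1}^nP(q^k)$ equals a convergent tail whose size I can bound termwise using condition~\eqref{condition-2} (which guarantees $P(q^k)\ne0$ and in fact that $|P(q^k)|$ grows like $|q|^{dk}$ up to a bounded factor, since $\gamma<1/M<1$).

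Second step: turn the bound on $v_n(\vec\omega)$ into bounds on $v_{l,n}(\vec\omega)$ for $0\le l\le d$. Here $v_{l,n}$ is obtained from $v_n$ by applying the product of difference operators $\prod_{k=1}^l\prod_{j=1}^m\mc D_{\alpha_jq^{-k}}^{s_j}$ as in~\eqref{eq-vln}. Each operator $\mc D_a=\mc I-a\mc B$ has operator ``norm'' at most $1+|a|$ when acting on sequences dominated by a geometric bound, so applying the (fixed, since $l\le d$) finite product of at most $dS$ such operators only inflates the estimate by a constant $c'$ depending on $q,P,m,\alpha_j,s_j$ — and each $\mc B$ replaces $n$ by $n-1$, which only helps. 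This gives the desired inequality $|v_{l,n}(\vec\omega)|\le\max_{j,k,\sigma}|\omega_{j,k,\sigma}|\cdot|q|^{-ln+(S-\eps_0/d)l^2/2+c(n+1)}$ for $0\le l\le d$, where I fold the tail-bound growth rate into $c$; note that for $l\le d$ the exponent $-ln+(S-\eps_0/d)l^2/2$ is bounded below by a constant times $n$, so one just needs $c$ large enough. (The $\eps_0/d$ correction is invisible at this stage and only matters in the inductive step.)

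Third step: apply Lemma~\ref{lem-estimate-induction} inductively. The hypothesis of that lemma — a bound of the shape $|v_{\nu,n}(\vec\omega)|\le|q|^{-\nu n+(S-\eps_0/d)\nu^2/2+an+b}$ valid for all $0\le\nu<l$ and $n\ge S\nu$ — is exactly what the second step supplies for $l=d$, with $a=c$ and $b=c+\log_{|q|}\max|\omega_{j,k,\sigma}|$ (absorbing the ``$+1$'' in $c(n+1)$ into $b$ and writing $c(n+1)=cn+c$). The lemma then yields the same shape for $\nu=d$ with $b$ replaced by $b+a+c'$; feeding this back in and iterating, after finitely many steps — but the number of steps grows with $l$ — we would pick up $l-d$ copies of $(a+c')$, which is linear in $l$ and hence in $n$ when $n\ge Sl$, so it can be reabsorbed into the $cn$ term at the cost of enlarging $c$. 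Making this absorption uniform is the one place requiring a little care, but it is routine: since $n\ge Sl$, any additive term of the form $c''l$ is $\le(c''/S)n$.

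The main obstacle is the first step: identifying $u_l(\vec\omega)$ correctly so that the partial sums of $\sum u_l(\vec\omega)/\prod_{k=1}^lP(q^k)$ really do converge to the vanishing linear form. Concretely, one must verify that
\begin{equation*}
\sum_{l=0}^\infty\frac{u_l(\vec\omega)}{\prod_{k=1}^lP(q^k)}=\sumjks\omega_{j,k,\sigma}f^{(\sigma)}(\alpha_jq^k),
\end{equation*}
which reduces to the identity $f^{(\sigma)}(\beta)=\sum_{l\ge0}\sigma!\binom l\sigma\beta^{l-\sigma}/\prod_{k=1}^lP(q^k)$ obtained by differentiating the defining series of $f$ $\sigma$ times term-by-term (legitimate inside the radius of convergence, which is infinite since $|P(q^k)|$ grows geometrically) and matching it against the definition~\eqref{eq-un} of $u_n$ at $n=l$, $\alpha_j q^k=\beta$. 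Once this bookkeeping is done, the tail estimate and everything after it is straightforward geometric-series manipulation.
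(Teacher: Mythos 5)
Your proposal is correct and follows essentially the same route as the paper: the tail representation $v_n(\vec\omega)=-\sum_{l>n}u_l(\vec\omega)/\prod_{k=n+1}^lP(q^k)$ coming from the vanishing hypothesis, a direct geometric bound for the base cases $\nu<d$, and then iterated application of Lemma~\ref{lem-estimate-induction} with the accumulated linear-in-$l$ error absorbed via $l\le n/S$. The only nit is that the non-vanishing and growth of $P(q^k)$ come from the standing assumption $P(q^n)\ne0$ and $|q|>1$ in the introduction, not from condition~\ref{condition-2} of Theorem~\ref{th-1}.
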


\begin{proof}
In the proof we denote by~$c_1,c_2,c_3$ positive constants depending only on~$q,P,m,\alpha_j,s_j$.

It follows from~\eqref{eq-un} that
\begin{equation*}
\omega_0+\sum_{n=0}^\infty\frac{u_n(\vec\omega)}{\prod_{k=1}^nP(q^k)}=\omega_0+\sumjks\omega_{j,k,\sigma}f^{(\sigma)}(\alpha_jq^k)=0.
\end{equation*}
Hence \eqref{eq-vn} gives
\begin{equation}\label{eq-intermediate-relation-2}
v_n(\vec\omega)=-\sum_{l=n+1}^\infty\frac{u_l(\vec\omega)}{\prod_{k=n+1}^lP(q^k)}.
\end{equation}

It follows from~\eqref{eq-un} that for $n\ge1$ we have
\begin{equation*}
|u_n(\vec\omega)|\le c_1^n\max_{j,k,\sigma}|\omega_{j,k,\sigma}|.
\end{equation*}
Hence \eqref{eq-intermediate-relation-2} gives
\begin{equation*}
|v_n(\vec\omega)|\le\max_{j,k,\sigma}|\omega_{j,k,\sigma}|\cdot\sum_{l=n+1}^\infty\frac{c_2c_1^l}{(2c_1)^{l-n}}=c_2c_1^n\max_{j,k,\sigma}|\omega_{j,k,\sigma}|.
\end{equation*}
Consequently for $0\le\nu<d$ and $n\ge S\nu$ we have
\begin{equation*}
|v_{\nu,n}(\vec\omega)|\le\max_{j,k,\sigma}|\omega_{j,k,\sigma}|\cdot|q|^{c_3(n+1)}\le\max_{j,k,\sigma}|\omega_{j,k,\sigma}|\cdot|q|^{-\nu n+(S-\eps_0/d)\nu^2/2+(c_3+d)n+c_3}.
\end{equation*}

It follows from Lemma~\ref{lem-estimate-induction} that for $l\ge0$ and $n\ge Sl$ we have
\begin{equation*}
|v_{l,n}(\vec\omega)|\le\max_{j,k,\sigma}|\omega_{j,k,\sigma}|\cdot|q|^{-ln+(S-\eps_0/d)l^2/2+(c_3+d)n+c_3+(c_3+d+c')l},
\end{equation*}
where $c'$ is the constant of Lemma~\ref{lem-estimate-induction}. Using $l\le n/S$, we obtain the lemma.
\end{proof}

\section{Non-vanishing lemma}

\begin{lemma}\label{lem-rationality}
Let $\vec\omega=(\omega_0,\omega_{j,k,\sigma})\in\CC^{1+dS}$ be such that for some $l_0,n_0\in\NNN$ with $n_0\ge Sl_0$ we have
\begin{equation}\label{eq-vln-vanish}
v_{l_0,n_0}(\vec\omega)=v_{l_0,n_0+1}(\vec\omega)=\ldots=v_{l_0,n_0+dS}(\vec\omega)=0.
\end{equation}
Then the generating function~$F(z)$ of the sequence~$v_n(\vec\omega)$,
\begin{equation*}
F(z)=\sum_{n=0}^\infty v_n(\vec\omega)z^n\in\CC[[z]],
\end{equation*}
is rational.
\end{lemma}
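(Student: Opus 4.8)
The plan is to pass to the generating functions $F(z)=\sum_{n\ge0}v_n(\vec\omega)z^n$ and $U(z)=\sum_{n\ge0}u_n(\vec\omega)z^n$, and to prove that $D(z)F(z)$ is a \emph{polynomial}, where
\[
D(z)=\prod_{k=1}^{l_0}\prod_{j=1}^m(1-\alpha_jq^{-k}z)^{s_j};
\]
since $D(0)=1$, this is exactly the rationality of $F$. Under the correspondence $\mc B\leftrightarrow z$ the operator $\prod_{k=1}^{l_0}\prod_{j=1}^m\mc D_{\alpha_jq^{-k}}^{s_j}$ defining $v_{l_0,n}$ in~\eqref{eq-vln} becomes multiplication by $D(z)$, so that $[z^n]\bigl(D(z)F(z)\bigr)=v_{l_0,n}(\vec\omega)$ for all $n\ge Sl_0$; in particular the hypothesis~\eqref{eq-vln-vanish} says that the power series $G(z):=D(z)F(z)$ has $dS+1$ consecutive vanishing coefficients, at $n=n_0,n_0+1,\dots,n_0+dS$ with $n_0\ge Sl_0$.

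First I would record that $U$ is rational: from~\eqref{eq-un} and $\sum_{n\ge0}\binom n\sigma w^n=w^\sigma(1-w)^{-\sigma-1}$,
\[
U(z)=\sumjks\frac{\sigma!\,\omega_{j,k,\sigma}z^\sigma}{(1-\alpha_jq^kz)^{\sigma+1}}=\frac{N(z)}{D_0(z)},\qquad D_0(z):=\prod_{j=1}^m\prod_{k=0}^{d-1}(1-\alpha_jq^kz)^{s_j},
\]
with $N\in\CC[z]$, $\deg N\le dS-1$. Next, multiplying~\eqref{eq-vn-linear-recurrence} by $z^n$, summing over $n\ge1$, using $P(q^n)=\sum_\nu p_\nu q^{\nu n}$ and $v_0-u_0=\omega_0$ (see~\eqref{eq-vn}), one gets the inhomogeneous $q$-difference functional equation
\[
(1-p_0z)F(z)-\sum_{\nu=1}^dp_\nu q^\nu zF(q^\nu z)=U(z)+\omega_0 .
\]

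Now put $E(z)=\prod_{j=1}^m\prod_{k=-l_0}^{d-1}(1-\alpha_jq^kz)^{s_j}$. Using condition~\ref{condition-1} (which makes the numbers $\alpha_jq^t$, $1\le j\le m$, $t\in\ZZ$, pairwise distinct for distinct pairs $(j,t)$), a comparison of roots shows that $E$ is divisible by $D_0$ and by each of $D(z),D(qz),\dots,D(q^dz)$, with $E/D=D_0$ and $\deg\bigl(E/D(q^\nu\cdot)\bigr)=dS$ for $1\le\nu\le d$. Multiplying the functional equation by $E(z)$ and substituting $F(q^\nu z)=G(q^\nu z)/D(q^\nu z)$ turns it into a polynomial identity $\sum_{\nu=0}^dQ_\nu(z)\,G(q^\nu z)=\Phi(z)$, where $Q_0(z)=(1-p_0z)D_0(z)$, $Q_\nu(z)=-p_\nu q^\nu z\cdot\bigl(E/D(q^\nu\cdot)\bigr)(z)$ for $1\le\nu\le d$, and $\Phi(z)=\omega_0E(z)+\bigl(E/D_0\bigr)(z)N(z)$. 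The numerics are: $Q_0(0)=1$, $Q_\nu(0)=0$ for $\nu\ge1$, $\deg Q_\nu\le dS+1$ for every $\nu$, and $\deg\Phi\le\deg E=S(l_0+d)$. Comparing coefficients of $z^n$ yields, for every $n$, a relation $\sum_{i=0}^{dS+1}R_i(q^n)\,g_{n-i}=[z^n]\Phi$, where $g_n=[z^n]G$ and $R_i(X)=\sum_\nu[z^i]Q_\nu\cdot q^{-\nu i}X^\nu\in\CC[X]$; here $R_0\equiv1$, and $[z^n]\Phi=0$ once $n>S(l_0+d)$. Hence $(g_n)$ obeys a linear recurrence of order $\le dS+1$ with leading coefficient $1$ for all $n\ge S(l_0+d)+1$. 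Since $n_0+dS+1\ge Sl_0+dS+1=S(l_0+d)+1$, this recurrence is in force at $n=n_0+dS+1,n_0+dS+2,\dots$, and, being fed the block of $dS+1$ consecutive zeros $g_{n_0}=\dots=g_{n_0+dS}=0$, it propagates forward to give $g_n=0$ for all $n\ge n_0$. Therefore $G(z)=\sum_{n<n_0}g_nz^n\in\CC[z]$ and $F(z)=G(z)/D(z)$ is rational.

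The step I expect to be the main obstacle is the degree bookkeeping producing the polynomial identity: one must take the multiplier to be the \emph{least} common multiple $E$ of $D(z),D(qz),\dots,D(q^dz),D_0(z)$, so that the parameter $l_0$ cancels and the recurrence for $(g_n)$ has order only $\le dS+1$ — matching exactly the $dS+1$ available vanishing conditions — instead of the far larger order a cruder clearing of denominators would produce; and simultaneously one must keep $\deg\Phi\le S(l_0+d)$, which is precisely what forces $n_0+dS+1>\deg\Phi$ and hence makes the recurrence active immediately past the prescribed block of zeros, legitimizing the forward propagation.
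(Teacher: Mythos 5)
Your proof is correct, but it runs along a genuinely different track from the paper's. The paper stays entirely on the sequence side: it introduces an auxiliary sequence $w_n$ defined by the constant-coefficient recurrence of order $Sl_0$ attached to $\prod_{k=1}^{l_0}\prod_{j=1}^m\mc D_{\alpha_jq^{-k}}^{s_j}$ with initial window taken from $v_n(\vec\omega)$, observes that the hypothesis~\eqref{eq-vln-vanish} forces $w_n=v_{n_0-Sl_0+n}(\vec\omega)$ on a window of length $S(l_0+d)+1$, and then shows that the defect $z_n=w_{n+1}-P(q^{n_0-Sl_0+n+1})w_n-u_{n_0-Sl_0+n+1}(\vec\omega)$ satisfies the homogeneous constant-coefficient recurrence of order $S(l_0+d)$ given by $\prod_{k=-l_0}^{d-1}\prod_j\mc D_{\alpha_jq^k}^{s_j}$ while vanishing on an initial window of that same length; hence $w\equiv v$ shifted, $v_n(\vec\omega)$ is ultimately linear recurrent, and $F$ is rational. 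You instead work on the generating-function side: you derive the $q$-difference equation (which in the paper appears only later, as~\eqref{eq-functional-equation} in the proof of the non-rationality lemma), clear denominators with $E(z)=\prod_j\prod_{k=-l_0}^{d-1}(1-\alpha_jq^kz)^{s_j}$, and obtain for $g_n=[z^n]\bigl(D(z)F(z)\bigr)$ a recurrence of order exactly $dS+1$ with leading coefficient $1$ (variable coefficients $R_i(q^n)$, which is harmless for forward propagation), valid for $n>\deg\Phi=S(l_0+d)$; the $dS+1$ hypothesized zeros then propagate, so $D(z)F(z)$ is a polynomial. The counting is the same in both arguments (the order $dS+1$ versus the $dS+1$ zeros, your bound $\deg\Phi\le S(l_0+d)$ playing the role of the paper's initial-window computation), but your version has the pleasant feature of reusing the functional equation and of producing the explicit denominator $D(z)$. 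One small remark: your appeal to condition~\ref{condition-1} is both unnecessary and, strictly speaking, not available — the lemma as stated assumes nothing about the $\alpha_j$ beyond $\alpha_j\in\CC^*$. Fortunately nothing is lost: the divisibility of $E$ by $D_0$ and by each $D(q^\nu z)$, $0\le\nu\le d$, holds without any distinctness, because the multiset of linear factors of $D(q^\nu z)$ (exponents $\nu-l_0,\dots,\nu-1$) is contained in that of $E$ (exponents $-l_0,\dots,d-1$), and likewise for $D_0$; so you should simply delete that invocation.
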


\begin{proof}
Consider the sequence~$\{w_n\}_{n\ge0}$ given by
\begin{gather*}
w_n=v_{n_0-Sl_0+n}(\vec\omega)\qquad(0\le n<Sl_0),\\
\prod_{k=1}^{l_0}\prod_{j=1}^m\mc D_{\alpha_jq^{-k}}^{s_j}(w_n)=0\qquad(n\ge Sl_0),
\end{gather*}
where $\mc D_a$ is given by~\eqref{eq-mcD}. From~\eqref{eq-vln} and~\eqref{eq-vln-vanish} it follows that
\begin{equation}\label{eq-wn-equals-vn}
w_n=v_{n_0-Sl_0+n}(\vec\omega)\qquad(0\le n\le S(l_0+d)).
\end{equation}

It follows from~\eqref{eq-mcD-identity} that for~$\nu\in\ZZ$ we have
\begin{equation*}
\prod_{k=1}^{l_0}\prod_{j=1}^m\mc D_{\alpha_jq^{\nu-k}}^{s_j}\bigl(q^{\nu n}w_n\bigr)=q^{\nu n}\prod_{k=1}^{l_0}\prod_{j=1}^m\mc D_{\alpha_jq^{-k}}^{s_j}(w_n)=0\qquad(n\ge Sl_0).
\end{equation*}
Hence the sequence
\begin{equation*}
z_n=w_{n+1}-P(q^{n_0-Sl_0+n+1})w_n-u_{n_0-Sl_0+n+1}(\vec\omega)\qquad(n\ge0)
\end{equation*}
satisfies the linear recurrence relation
\begin{equation*}
\prod_{k=-l_0}^{d-1}\prod_{j=1}^m\mc D_{\alpha_jq^k}^{s_j}(z_n)=0\qquad(n\ge S(l_0+d))
\end{equation*}
of order~$S(l_0+d)$.

On the other hand, it follows from~\eqref{eq-vn-linear-recurrence} and~\eqref{eq-wn-equals-vn} that $z_n=0$ for~$0\le n<S(l_0+d)$. Hence $w_n=v_{n_0-Sl_0+n}(\vec\omega)$ for all~$n\ge0$, i.\,e., $v_n(\vec\omega)$ is linear recurrent and
\begin{equation*}
F(z)=\sum_{n\ge0}v_n(\vec\omega)z^n\in\CC(z).
\end{equation*}
This completes the proof. \end{proof}

\begin{lemma}\label{lem-non-rationality}
Let $\alpha_1,\ldots,\alpha_m$ satisfy the conditions~\ref{condition-1}--\ref{condition-2} of Theorem~\ref{th-1}, $\vec\omega=(\omega_0,\omega_{j,k,\sigma})\in\CC^{1+dS}\setminus\{\vec0\}$. Then the generating function~$F(z)$ of the sequence~$v_n(\vec\omega)$,
\begin{equation*}
F(z)=\sum_{n=0}^\infty v_n(\vec\omega)z^n\in\CC[[z]],
\end{equation*}
is not rational.
\end{lemma}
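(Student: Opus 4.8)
The plan is to assume that $F$ is rational and derive a contradiction from the $q$-difference equation satisfied by $F$ together with the recurrence~\eqref{eq-vn-linear-recurrence}. Throughout write $u_n(\vec\omega)=\sum_{j=1}^m\sum_{k=0}^{d-1}Q_{j,k}(n)(\alpha_jq^k)^n$, where $Q_{j,k}(n)=\sum_{\sigma=0}^{s_j-1}\sigma!\binom n\sigma(\alpha_jq^k)^{-\sigma}\omega_{j,k,\sigma}\in\CC[n]$ has degree $<s_j$, and note that the numbers $\alpha_jq^k$ ($1\le j\le m$, $0\le k<d$) are pairwise distinct, by condition~\ref{condition-1} and $|q|>1$. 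First I record two easy facts. \emph{(a)}~$F$ is not a polynomial: otherwise $v_n(\vec\omega)=0$ for all large $n$, so~\eqref{eq-vn-linear-recurrence} gives $u_n(\vec\omega)=0$ for all large $n$, forcing every $Q_{j,k}\equiv0$ and hence every $\omega_{j,k,\sigma}=0$; then $v_n(\vec\omega)=P(q^n)v_{n-1}(\vec\omega)$ together with $P(q^n)\ne0$ propagates $v_n(\vec\omega)=0$ downward to $v_0(\vec\omega)=\omega_0$, so $\vec\omega=\vec0$. \emph{(b)}~Summing~\eqref{eq-vn-linear-recurrence} against $z^n$ over $n\ge1$, adding $v_0(\vec\omega)=\omega_0+u_0(\vec\omega)$, and using $P(q^n)=\sum_{\nu=0}^dp_\nu q^{\nu n}$, one obtains
\[
(1-p_0z)F(z)-\sum_{\nu=1}^dp_\nu q^\nu z\,F(q^\nu z)=G(z),\qquad G(z):=\omega_0+\sum_{n\ge0}u_n(\vec\omega)z^n,
\]
and $G\in\CC(z)$ has poles only at points of the form $(\alpha_jq^k)^{-1}$.

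Assume now $F\in\CC(z)$; by (a) it has at least one pole, all poles lying in $\CC^*$. Call $\beta\in\CC^*$ a \emph{frequency} if $1/\beta$ is a pole of $F$; then $v_n(\vec\omega)=\sum_{i=1}^rR_i(n)\beta_i^n$ for all $n\ge n_1$, where $\beta_1,\dots,\beta_r$ are the distinct frequencies and $R_i\in\CC[n]\setminus\{0\}$. Let $\zeta$ be a pole of $F$ which is not of the form $(\alpha_jq^k)^{-1}$ and (if $p_0\ne0$) is not $1/p_0$. Looking at $z=\zeta$ in the displayed equation: $(1-p_0z)F(z)$ still has a genuine pole there, $G$ is regular there, and for $\nu\ge1$ the function $F(q^\nu z)$ is regular at $z=\zeta$ unless $q^\nu\zeta$ is a pole of $F$; hence $q^\nu\zeta$ must be a pole of $F$ for some $\nu\in\{1,\dots,d\}$ (necessarily with $p_\nu\ne0$). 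Since $|q^\nu\zeta|>|\zeta|$ and the pole set is finite, iterating shows that every pole of $F$ equals $q^{-t}$ times a \emph{special point} for some integer $t\ge0$, a special point being $1/p_0$ or one of the $(\alpha_jq^k)^{-1}$. Equivalently, every frequency $\beta_i$ has the form $\alpha_jq^s$ with $s\ge0$, or the form $p_0q^t$ with $t\ge0$.

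For the decisive step, substitute $v_n(\vec\omega)=\sum_iR_i(n)\beta_i^n$ into~\eqref{eq-vn-linear-recurrence} to obtain, for all large $n$,
\[
\sum_iR_i(n)\beta_i^n=\sum_i\sum_{\nu=0}^dp_\nu\beta_i^{-1}R_i(n-1)(q^\nu\beta_i)^n+\sum_{j=1}^m\sum_{k=0}^{d-1}Q_{j,k}(n)(\alpha_jq^k)^n .
\]
Grouping by the value of the exponential base and using that the functions $n\mapsto n^a\gamma^n$ ($\gamma\in\CC^*$ distinct, $a\ge0$) are linearly independent, the total polynomial coefficient of $\gamma^n$ must vanish for each $\gamma$. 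Apply this with $\gamma=q^d\beta_{i_0}$, where $|\beta_{i_0}|$ is maximal among the frequencies: no term of $\sum_iR_i(n)\beta_i^n$ has base $q^d\beta_{i_0}$, and in the double sum only $(i,\nu)=(i_0,d)$ does (any other would need a frequency of modulus $>|\beta_{i_0}|$); hence $\sum_{(j,k):\,\alpha_jq^k=q^d\beta_{i_0}}Q_{j,k}(n)=-p_d\beta_{i_0}^{-1}R_{i_0}(n-1)$, a nonzero polynomial. So this index set is nonempty, and condition~\ref{condition-1} (with $|q|>1$) forces it to be a single pair $(j_0,k_0)$; thus $\beta_{i_0}=\alpha_{j_0}q^{k_0-d}$ with $0\le k_0\le d-1$, so its $q$-exponent $k_0-d$ is $\le-1$. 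This contradicts the previous paragraph: $\alpha_{j_0}q^{k_0-d}=\alpha_jq^s$ with $s\ge0$ forces $j=j_0$ (by condition~\ref{condition-1}) and then $s=k_0-d<0$, impossible; while $\alpha_{j_0}q^{k_0-d}=p_0q^t$ with $t\ge0$ gives $\alpha_{j_0}=p_0q^{\,t-k_0+d}$ with $t-k_0+d\ge1$, i.e.\ $\alpha_{j_0}\in P(0)q^{\NN}$, contradicting condition~\ref{condition-2}. Hence $F$ is not rational.

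The main obstacle is this last extraction of the dominant frequency: one must pin down that exactly one pair $(j,k)$ matches the extremal base $q^d\beta_{i_0}$ (where condition~\ref{condition-1} is used) and then play the negativity of the resulting exponent $k_0-d$ against the structural form of the frequencies obtained by pole-propagation, which is precisely where both arithmetic conditions on the $\alpha_j$ get consumed. A secondary point is to handle the cases $p_0=0$ and $P(z)=p_dz^d$ in the pole-propagation argument, where the point $1/p_0$ does not arise.
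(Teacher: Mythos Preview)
Your proof is correct. Both your argument and the paper's hinge on the functional equation
\[
(1-p_0z)F(z)=\sum_{\nu=1}^dp_\nu q^\nu z\,F(q^\nu z)+G(z)
\]
and a pole-propagation analysis, but the two diverge in how the contradiction is closed. The paper first argues that every pole of $F$ has the form $\alpha_j^{-1}q^n$ with $n\ge1$ (a minimal-counterexample argument, propagating \emph{downward} via $z\mapsto\beta q^{-d}$), observes that $F$ and $G$ then share no poles, and finishes by looking at the pole $\beta$ of \emph{maximal} modulus: the functional equation forces $(1-p_0z)F(z)$ to be regular at $\beta$, so $p_0\beta=1$, i.e.\ $\alpha_j=P(0)q^n$, violating condition~\ref{condition-2}. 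Your argument instead propagates poles \emph{upward} to conclude every frequency lies in $\{\alpha_jq^s:s\ge0\}\cup\{p_0q^t:t\ge0\}$, and then, rather than reusing the functional equation, you substitute the exponential-polynomial expansion of $v_n$ directly into the recurrence $v_n=P(q^n)v_{n-1}+u_n$ and read off the coefficient at the largest base $q^d\beta_{i_0}$. This second step is more computational than the paper's one-line maximal-pole argument, but it has the merit of making the role of condition~\ref{condition-2} (and the possible $p_0q^t$ frequencies) completely explicit. Your step~(a) showing $F$ is not a polynomial is also a pleasant shortcut; the paper instead deduces $\sum\omega_{j,k,\sigma}f^{(\sigma)}(\alpha_jq^k)=-\omega_0$ from the growth bound $v_n=O(C^n)$ and then infers that $G$ (hence $F$) has a pole.
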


\begin{proof}
Assume the converse. Then for some constant~$C>1$ we have $|v_n(\vec\omega)|=O(C^n)$. It follows from~\eqref{eq-un} and~\eqref{eq-vn} that
\begin{equation*}
\omega_0+\sumjks\omega_{j,k,\sigma}f^{(\sigma)}(\alpha_jq^k)=\omega_0+\sum_{n=0}^\infty\frac{u_n(\vec\omega)}{\prod_{k=1}^nP(q^k)}=0.
\end{equation*}
In particular, not all~$\omega_{j,k,\sigma}$ vanish.

From~\eqref{eq-vn-linear-recurrence} it follows that $F(z)$ satisfies the functional equation
\begin{equation}\label{eq-functional-equation}
(1-p_0z)F(z)=\sum_{\nu=1}^dp_\nu q^\nu zF(q^\nu z)+R(z),
\end{equation}
where
\begin{gather*}
P(z)=\sum_{\nu=0}^dp_\nu z^\nu,\\
R(z)=\omega_0+\sum_{n=0}^\infty u_n(\vec\omega)z^n=\omega_0+\sumjks\frac{\omega_{j,k,\sigma}\sigma!z^\sigma}{(1-\alpha_jq^kz)^{\sigma+1}}\in\CC(z).
\end{gather*}

The condition~\ref{condition-1} of Theorem~\ref{th-1} implies that all $\alpha_jq^k$ are different. Since not all $\omega_{j,k,\sigma}$ vanish, the function~$R(z)$ has at least one pole. It follows from~\eqref{eq-functional-equation} that $F(z)$ also has a pole in~$\CC^*$.

We claim that any pole of~$F(z)$ is of the form~$\alpha_j^{-1}q^n$ with~$n\in\NN$. Assume the contrary. Let $\beta$ be a pole that cannot be represented in this form with the least~$|\beta|$. Then $R(z)$ doesn't have a pole at the point~$\beta q^{-d}$. It follows from~\eqref{eq-functional-equation} that one of the functions~$F(q^\nu z)$ with $0\le\nu<d$ has a pole at $\beta q^{-d}$. Hence we have $\beta=\beta'q^{d-\nu}$ for some pole~$\beta'$ of~$F(z)$. But then $|\beta'|<|\beta|$. Consequently $\beta'$ can be represented in the required form as well as~$\beta$. This contradiction proves our claim about poles of~$F(z)$. In particular, it follows from the condition~\ref{condition-1} of Theorem~\ref{th-1} that $F(z)$ and $R(z)$ do not have common poles.

Now suppose $\beta$ is a pole of~$F(z)$ with maximal~$|\beta|$. It follows from~\eqref{eq-functional-equation} and the above that the function~$(1-p_0z)F(z)$ does not have a singularity at the point~$\beta$. Hence $p_0\beta=1$. Since $\beta=\alpha_j^{-1}q^n$ with $n\in\NN$, this contradicts the condition~\ref{condition-2} of Theorem~\ref{th-1}. This contradiction proves the lemma.
\end{proof}

From Lemmas~\ref{lem-rationality} and~\ref{lem-non-rationality}, we get the following non-vanishing lemma.

\begin{lemma}
Let $\alpha_1,\ldots,\alpha_m$ satisfy the conditions~\ref{condition-1}--\ref{condition-2} of Theorem~\ref{th-1}, $\vec\omega=(\omega_0,\omega_{j,k,\sigma})\in\CC^{1+dS}\setminus\{\vec0\}$. Then for any $l_0,n_0\in\NNN$ with $n_0\ge Sl_0$ there exists an integer~$n$ with $n_0\le n\le n_0+dS$ such that $v_{l_0,n}(\vec\omega)\ne0$.\qed
\end{lemma}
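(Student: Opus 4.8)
The plan is to combine the two preceding lemmas in a straightforward contrapositive argument. Suppose, for contradiction, that there exist $l_0,n_0\in\NNN$ with $n_0\ge Sl_0$ such that $v_{l_0,n}(\vec\omega)=0$ for every integer $n$ in the range $n_0\le n\le n_0+dS$. This is precisely the hypothesis \eqref{eq-vln-vanish} of Lemma~\ref{lem-rationality} (with the same $l_0$, $n_0$), so that lemma applies and yields that the generating function $F(z)=\sum_{n\ge0}v_n(\vec\omega)z^n$ is rational.

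On the other hand, the vector $\vec\omega\in\CC^{1+dS}\setminus\{\vec0\}$ and the $\alpha_j$ satisfy conditions~\ref{condition-1}--\ref{condition-2} of Theorem~\ref{th-1}, which are exactly the hypotheses of Lemma~\ref{lem-non-rationality}; hence that same generating function $F(z)$ is \emph{not} rational. This contradiction shows that no such pair $(l_0,n_0)$ exists, i.e., for every $l_0,n_0\in\NNN$ with $n_0\ge Sl_0$ at least one of the values $v_{l_0,n_0}(\vec\omega),\ldots,v_{l_0,n_0+dS}(\vec\omega)$ is nonzero, which is the assertion of the lemma.

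There is essentially no obstacle here: the entire content has been packed into Lemmas~\ref{lem-rationality} and~\ref{lem-non-rationality}, and this final statement is just their conjunction phrased as a non-vanishing property. The only point worth a moment's care is checking that the index ranges match—Lemma~\ref{lem-rationality} needs the $dS+1$ consecutive vanishings $v_{l_0,n_0}=\cdots=v_{l_0,n_0+dS}=0$, and the negation of the present claim supplies exactly those—so the two lemmas dovetail without any gap. Accordingly the proof is a single line invoking both, and the \qed in the statement reflects that no further argument is needed.
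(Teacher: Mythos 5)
Your argument is exactly the paper's: the lemma is stated with a \qed precisely because it follows immediately by combining Lemma~\ref{lem-rationality} (the $dS+1$ consecutive vanishings force $F$ to be rational) with Lemma~\ref{lem-non-rationality} ($F$ is not rational under conditions~\ref{condition-1}--\ref{condition-2}). Your contrapositive phrasing and the check that the index ranges match are correct, so there is nothing to add.
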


\section{Main proposition}

Suppose~$\alpha_j\in\QQ^*$ ($1\le j\le m$). Denote by~$D$ any positive integer such that $DP(z)\in\ZZ[z]$ and $D\alpha_jq^k\in\ZZ$ for $1\le j\le m$, $0\le k<d$. For $l,n\in\NNN$ with $n\ge Sl$ consider
\begin{equation*}
w_{l,n}=w_{l,n}(\vec x)=D^nq_1^{Sl(l+1)/2}q_2^{dn(n+1)/2}v_{l,n}(\vec x).
\end{equation*}
It follows from~\eqref{eq-un} and~\eqref{eq-vn} that
\begin{equation*}
D^nq_2^{dn(n+1)/2}v_n\in\ZZ[\vec x]\qquad(n\ge0).
\end{equation*}
Combining this with~\eqref{eq-vln}, we get~$w_{l,n}\in\ZZ[\vec x]$.

For a linear form~$L$ denote by~$\mc H(L)$ the maximum of absolute values of its coefficients. From~\eqref{eq-un} and~\eqref{eq-vn} it follows that
\begin{equation*}
\mc H(v_n)\le|q|^{dn^2/2+O(n+1)}.
\end{equation*}
In view of~\eqref{eq-vln} the same estimate is valid for~$\mc H(v_{l,n})$ ($n\ge Sl\ge0$). Finally, for~$w_{l,n}$ we have
\begin{equation*}
\mc H(w_{l,n})\le|q_1|^{dn^2/2+Sl^2/2+O(n+1)}\qquad(n\ge Sl\ge0).
\end{equation*}

The above can be summarized as follows.

\begin{proposition}\label{prop-main}
Under the hypotheses of Theorem~\ref{th-1}, for any $l,n\in\NNN$ with $n\ge Sl$ there exists a linear form~$w_{l,n}=w_{l,n}(\vec x)\in\ZZ[\vec x]$ such that the following conditions hold:
\begin{enumerate}
\item $\mc H(w_{l,n})\le|q_1|^{dn^2/2+Sl^2/2+O(n+1)}$,
\item for any $\vec\omega=(\omega_0,\omega_{j,k,\sigma})\in\CC^{1+dS}$ such that
\begin{equation*}
\omega_0+\sumjks\omega_{j,k,\sigma}f^{(\sigma)}(\alpha_jq^k)=0
\end{equation*}
we have
\begin{equation*}
|w_{l,n}(\vec\omega)|\le\max_{j,k,\sigma}|\omega_{j,k,\sigma}|\cdot|q_1|^{\gamma dn^2/2-(1-\gamma)ln+\bigl((1-\gamma/2)S-(1-\gamma)\eps_0/(2d)\bigr)l^2+O(n+1)},
\end{equation*}
where $\gamma$ and $\eps_0$ are given by~\eqref{eq-gamma} and~\eqref{eq-eps0},
\item for any $\vec\omega=(\omega_0,\omega_{j,k,\sigma})\in\CC^{1+dS}\setminus\{\vec0\}$ and $l_0,n_0\in\NNN$ with $n_0\ge Sl_0$ there exists an integer~$n$ with $n_0\le n\le n_0+dS$ such that $w_{l_0,n}(\vec\omega)\ne0$.
\end{enumerate}
The constants in the Landau symbols~$O(\cdot)$ depend only on~$q,P,m,\alpha_j,s_j$.\qed
\end{proposition}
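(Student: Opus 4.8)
The plan is simply to verify that the explicitly normalized forms $w_{l,n}(\vec x)=D^nq_1^{Sl(l+1)/2}q_2^{dn(n+1)/2}v_{l,n}(\vec x)$, already introduced above, meet the three listed requirements; no new idea is needed beyond Lemmas~\ref{lem-estimate-induction}--\ref{lem-non-rationality} and the defining formulas~\eqref{eq-un}, \eqref{eq-vn}, \eqref{eq-vln}. Integrality of $w_{l,n}$ has been checked already: $D^n$ together with $q_2^{dn(n+1)/2}$ clears the denominators coming from $\prod_{k=1}^nP(q^k)$ and from the powers $(\alpha_jq^k)^{n-\sigma}$ in~\eqref{eq-un} (recall $DP(z)\in\ZZ[z]$ and $D\alpha_jq^k\in\ZZ$), while the extra factor $q_1^{Sl(l+1)/2}$ cancels the power of $q_1$ put in the denominator by the $S\sum_{k=1}^lk=Sl(l+1)/2$ applications of the operators $\mc D_{\alpha_jq^{-k}}=\mc I-\alpha_jq_2^kq_1^{-k}\mc B$ which turn $v_n$ into $v_{l,n}$.

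For condition~1 I would first bound $\mc H(v_n)\le|q|^{dn^2/2+O(n+1)}$ straight from~\eqref{eq-un}--\eqref{eq-vn} (every coefficient is at most $|q|^{dn^2/2+O(n)}$ in absolute value, the leading contribution being the coefficient $\prod_{k=1}^nP(q^k)$ of $x_0$), then note that the $\le Sl$ difference operators express $v_{l,n}$ as a fixed linear combination of $v_{n-Sl},\dots,v_n$ with coefficients of size $\prod_{k=1}^l\prod_j(1+|\alpha_jq^{-k}|)^{s_j}=|q|^{O(l)}$, so that $\mc H(v_{l,n})\le|q|^{dn^2/2+O(n+1)}$ (using $l\le n/S$); finally pass to $w_{l,n}$: with $|q_2|=|q_1|^\gamma$ and $|q|=|q_1|^{1-\gamma}$ from~\eqref{eq-gamma}, the product $|q_2|^{dn(n+1)/2}|q|^{dn^2/2}$ contributes $|q_1|^{dn^2/2+O(n)}$ and $D^nq_1^{Sl(l+1)/2}$ contributes $|q_1|^{Sl^2/2+O(n)}$, giving $\mc H(w_{l,n})\le|q_1|^{dn^2/2+Sl^2/2+O(n+1)}$.

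For condition~2 I would feed a vector $\vec\omega$ lying on the hyperplane $\omega_0+\sumjks\omega_{j,k,\sigma}f^{(\sigma)}(\alpha_jq^k)=0$ into Lemma~\ref{lem-estimate}, obtaining $|v_{l,n}(\vec\omega)|\le\max_{j,k,\sigma}|\omega_{j,k,\sigma}|\cdot|q|^{-ln+(S-\eps_0/d)l^2/2+c(n+1)}$, multiply through by $D^nq_1^{Sl(l+1)/2}q_2^{dn(n+1)/2}$, and rewrite each factor as a power of $|q_1|$. Collecting exponents: the $n^2$-term is $\gamma dn^2/2$, the $ln$-term is $-(1-\gamma)ln$, the $l^2$-coefficient is $S/2+(1-\gamma)(S-\eps_0/d)/2=(1-\gamma/2)S-(1-\gamma)\eps_0/(2d)$, and, since $l\le n/S$, everything else is $O(n+1)$ with constants depending only on $q,P,m,\alpha_j,s_j$. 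This is exactly the asserted bound; this bit of exponent bookkeeping is the one spot that needs care, and even it is entirely routine.

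Condition~3 is immediate: the scalar $D^nq_1^{Sl_0(l_0+1)/2}q_2^{dn(n+1)/2}$ is nonzero, so $w_{l_0,n}(\vec\omega)=0$ precisely when $v_{l_0,n}(\vec\omega)=0$, and the latter is ruled out for some integer $n$ in $[n_0,n_0+dS]$ by the non-vanishing lemma stated at the end of Section~3 (deduced there from Lemmas~\ref{lem-rationality} and~\ref{lem-non-rationality}, using conditions~\ref{condition-1}--\ref{condition-2} of Theorem~\ref{th-1}). With all three conditions in hand and the $O$-constants traced back to those of Lemmas~\ref{lem-estimate-induction}, \ref{lem-estimate} and to the choice of~$D$, the proposition follows. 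The only --- and mild --- obstacle is keeping the two normalizations (powers of $|q|$ used in Sections~2--3 versus powers of $|q_1|$ in the statement) consistent throughout the exponent arithmetic.
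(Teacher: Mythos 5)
Your proposal is correct and follows essentially the same route as the paper, whose own ``proof'' of Proposition~\ref{prop-main} is precisely the preceding discussion in Section~4: the same normalization $w_{l,n}=D^nq_1^{Sl(l+1)/2}q_2^{dn(n+1)/2}v_{l,n}$, the same denominator-clearing and height bookkeeping for conditions~1--2 (via Lemma~\ref{lem-estimate}), and the same appeal to the non-vanishing lemma for condition~3. Your exponent arithmetic, including the identity $S/2+(1-\gamma)(S-\eps_0/d)/2=(1-\gamma/2)S-(1-\gamma)\eps_0/(2d)$, checks out.
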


\section{Proof of Theorem~\ref{th-1}}

Take
\begin{equation*}
n_0=\left\lceil\frac{dS-\eps_0/2+\sqrt{(dS)^2+(1-\eps_0)dS+\eps_0^2/4}}d\,l\right\rceil=\left\lceil\frac{(M-1)l}d\right\rceil\ge Sl,
\end{equation*}
where $M$ is given by~\eqref{eq-M} and $l\in\NNN$ will be chosen later. It follows from Proposition~\ref{prop-main} that there exists an integer~$n=n_0+O(1)$ such that $w_{l,n}(\vec A)\ne0$. Since $w_{l,n}\in\ZZ[\vec x]$, we get
\begin{equation*}
|w_{l,n}(\vec A)|\ge1.
\end{equation*}

Let $\vec\omega=(\omega_0,\omega_{j,k,\sigma})$ be given by
\begin{gather*}
\omega_{j,k,\sigma}=A_{j,k,\sigma},\\
\omega_0=-\sumjks\omega_{j,k,\sigma}f^{(\sigma)}(\alpha_jq^k).
\end{gather*}
Using Proposition~\ref{prop-main}, we get
\begin{equation*}
|w_{l,n}(\vec\omega)|\le H|q_1|^{-al^2+O(l+1)},
\end{equation*}
where
\begin{equation*}
a=\frac{1-M\gamma}d\sqrt{(dS)^2+(1-\eps_0)dS+\eps_0^2/4}.
\end{equation*}
Take $l=(L/a)^{1/2}+O(1)$, where $L=\frac{\log H}{\log|q_1|}$, such that
\begin{equation*}
|w_{l,n}(\vec\omega)|\le1/2.
\end{equation*}
Then we have
\begin{equation*}
|w_{l,n}(\vec A)-w_{l,n}(\vec\omega)|\ge1/2.
\end{equation*}

On the other hand, using~Proposition~\ref{prop-main}, we get
\begin{equation*}
|w_{l,n}(\vec A)-w_{l,n}(\vec\omega)|\le\mc H(w_{l,n})|A_0-\omega_0|\le|A_0-\omega_0|\cdot|q_1|^{\mu L+O(L^{1/2})},
\end{equation*}
where $\mu$ is given by~\eqref{eq-mu}. Since
\begin{equation*}
|A_0-\omega_0|=\left|A_0+\sumjks A_{j,k,\sigma}f^{(\sigma)}(\alpha_jq^k)\right|,
\end{equation*}
we obtain Theorem~\ref{th-1}.

\end{document}